\newtheorem{theorem}{Theorem}[section]
\newtheorem{lemma}[theorem]{Lemma}
\newtheorem{proposition}[theorem]{Proposition}
\theoremstyle{definition}
\newtheorem{definition}[theorem]{Definition}
\theoremstyle{remark}
\theoremstyle{definition}
\newtheorem{example}[theorem]{Example}
\newcommand{\npmatrix}[1]{\left( \begin{matrix} #1 \end{matrix} \right)}
\newcommand{\C}{\mathbb{C}}
\newcommand{\F}{\mathbb{F}}
\newcommand{\mc}{\mathcal}
 \definecolor{helena}{rgb}{.2,.8,.4}
    \definecolor{olga}{rgb}{.8,.2,.2}
   \definecolor{todo}{rgb}{.2,.2,.8}
\journal{Linear Algebra and its Applications}
\begin{document}

\allowdisplaybreaks

\begin{frontmatter}

\title{The effect of assuming the identity as a generator on the length of the matrix algebra }

\author[UCD]{Thomas Laffey}
\ead{thomas.laffey@ucd.ie}
\author[MSU]{Olga Markova\fnref{fn2}}
\ead{ov\_markova@mail.ru}
\author[UCD]{Helena \v{S}migoc\corref{cor1}}
\ead{helena.smigoc@ucd.ie}

\cortext[cor1]{Corresponding author}
\fntext[fn1]{This work was supported by Science Foundation Ireland under Grant 11/RFP.1/MTH/3157.}
\fntext[fn2]{The author  is partially financially supported by grant MD-962.2014.1.}

\address[UCD]{ School of Mathematical Sciences, University College Dublin, Belfield, Dublin 4, Ireland
}
\address[MSU]{Department of Algebra, Faculty of Mechanics and Mathematics, Lomonosov Moscow State University,
Moscow 119991,   Russia}

\begin{abstract}
Let $M_n(\F)$ be the algebra of $n \times n$ matrices and let $\mc S$ be a generating set of $M_n(\F)$ as an $\F$-algebra. The length of a finite generating set $\mc S$ of  $M_n(\F)$ is the smallest number $k$ such that words of length not greater than $k$ generate $M_n(\F)$ as a vector space. Traditionally the identity matrix is assumed to be automatically included in all generating sets $\mc S$ and counted as a word of length $0$. In this paper we discuss how the problem changes if this assumption is removed.
\end{abstract}

\begin{keyword} Finitely generated algebra, Lengths of Algebras
	\MSC[2010] 15A30, 13E10, 16S50.
\end{keyword}

\end{frontmatter}

\emph{This paper is dedicated to the memory of Hans Schneider.}

\section{Introduction}\label{sec:Introduction}

Let $\mc A$ be a finite-dimensional $\F$-algebra for some field $\F$. The {\em length of a finite generating set} $\mc S$ of algebra $\mc A$ is the smallest number $k$ such that words of length not greater than $k$ generate $\mc A$ as a vector space. The {\em length of the algebra} is the maximum of lengths of its generating sets.  The problem of evaluating the length of the full matrix algebra was posed by A. Paz in \cite{MR740668}, where it is conjectured that the length of the full matrix algebra $M_n(\F)$ is equal to $2n-2$. While several results on the length of $M_n(\F)$ \cite{MR1483779,MR2230653, MR2540352}, its generating sets \cite{MR2050111,MR2112883,MR2823023} and subalgebras \cite{ MR2366242, MR2494664, MR3032290,Magic} were found, the conjecture remains open.

It is usually assumed, that the identity matrix is included in generating set $\mc S$ and is regarded as having length $0$.  In this paper we pose the question, how do the lengths of  generating sets and of algebras differ if we remove this assumption. While we can talk about this notion for more general algebras, we will concentrate our discussion on matrix algebras.

Let us begin by introducing some notation.
Let $\mc S$ be a subset of an algebra $\mc A$, then $\mc L(\mc S)$ will denote the subalgebra of $\mc A$  generated by the set $\mc S$, and $\mc L_k(\mc S)$ will denote the vector space generated by all the words in $\mc S$ of length at most $k.$  If $\mc A$ is unitary then $\mc L_0(\mc S)=\F$ and $\mc L_0(\mc S)=0$ otherwise. Clearly we have:
 $$\mc L(\mc S)=\bigcup_{i=1}^{\infty}\mc L_i(\mc S).$$
The length of a generating set $\mc S$ will be denoted by $l(\mc S)$, and the length of the algebra $\mc A$ will be denoted $l(\mc A)$. Note that $l(\mc S)$ is the smallest number $k$, such that $\mc L_k(\mc S)=\mc L_{k+1}(\mc S).$

Now we introduce the same notions as above again, but this time we do not assume that the identity is included in the generating set $\mc S$.
 For a subset $\mc S$ of an algebra $\mc A$ we will denote $\mc L_k^0(\mc S)$ to be the vector space generated by all the words in $\mc S$ of length at most $k$, where we do not assume the identity is included in $\mc S.$ (So in general $\mc L_k^0(\mc S) \subseteq \mc L_k(\mc S)$.)
Similarly, $\mc L^0(\mc S)$ will denote the algebra generated by the set $\mc S.$
 We also define the notion of the length in this sense.  The length of a generating set $\mc S$ without the identity automatically included in $\mc S$ will be denoted by $l_0(\mc S)$ and $l_0(\mc A)$ will denote the length of $\mc A$ in this sense.

 We will also need some notation for matrices. $J_k(\lambda)$ will denote the $k \times k$ Jordan block corresponding to the eigenvalue $\lambda$, and $J_k=J_k(0).$ We will denote by $I_n$ the $n \times n$ identity matrix, by $O_n$, $O_{r,s}$ the  $n\times n$ and $r \times s$ zero matrices and by $E_{i,j}$ the matrix unit, i.e. the matrix with $1$ in $(i,j)$-th position and $0$ elsewhere.

\section{Main Results}

The following proposition is perhaps the main motivation, why the identity is traditionally assumed to be included the generating set.

\begin{proposition}
Let $\mc S$ be a generating set of a simple algebra $\mc A$ of dimension greater than $1$,  i.e. $\mc L(\mc S)=\mc A$. Then $\mc L^0(\mc S)=\mc A$. \end{proposition}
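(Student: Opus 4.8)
The plan is to show that $\mc L^0(\mc S)$ is actually a two-sided ideal of $\mc A$, at which point simplicity finishes the argument. We may assume $\mc A$ is unital, since otherwise $\mc L_0(\mc S)=0$ and $\mc L(\mc S)=\mc L^0(\mc S)$, so the statement is trivial. (In fact a finite-dimensional simple algebra is automatically unital, so this is the only case.) I would begin by recording the elementary relation between the two notions of generation: a word in $\mc S$ is either the empty word, contributing the identity, or a non-empty word, which lies in $\mc L^0(\mc S)$. Hence $\mc L_k(\mc S)=\mc L_k^0(\mc S)+\F 1$ for every $k$, and passing to the increasing union over $k$ gives $\mc A=\mc L(\mc S)=\mc L^0(\mc S)+\F 1$.

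Next I would use this decomposition to verify that $\mc L^0(\mc S)$ absorbs multiplication by $\mc A$. By its very construction $\mc L^0(\mc S)$ is a (non-unital) subalgebra, hence closed under products. Given $x\in\mc L^0(\mc S)$ and $a\in\mc A$, write $a=y+\lambda 1$ with $y\in\mc L^0(\mc S)$ and $\lambda\in\F$; then $ax=yx+\lambda x\in\mc L^0(\mc S)$, and similarly $xa=xy+\lambda x\in\mc L^0(\mc S)$. Therefore $\mc L^0(\mc S)$ is a two-sided ideal of $\mc A$.

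Finally, since $\mc A$ is simple, $\mc L^0(\mc S)$ must be either $0$ or all of $\mc A$. If it were $0$, then $\mc A=\mc L^0(\mc S)+\F 1=\F 1$ would be one-dimensional, contradicting the hypothesis that $\dim\mc A>1$. Hence $\mc L^0(\mc S)=\mc A$.

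I do not expect a genuine obstacle: the argument is short once one sees the right object to look at. The only points requiring a moment of care are the bookkeeping that yields $\mc A=\mc L^0(\mc S)+\F 1$ (which uses that $\mc L(\mc S)=\bigcup_k \mc L_k(\mc S)$ and that adding the fixed line $\F 1$ commutes with this increasing union), and the observation that makes the ideal property work, namely that every element of $\mc A$ differs from an element of $\mc L^0(\mc S)$ by a scalar multiple of the identity.
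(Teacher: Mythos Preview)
Your argument is correct and follows the same idea as the paper's proof: the paper simply observes that $\mc L^0(\mc S)$ is a nonzero ideal of $\mc A$ and invokes simplicity. You have carefully justified both the ideal property (via the decomposition $\mc A=\mc L^0(\mc S)+\F 1$) and the nonzeroness (via $\dim\mc A>1$), which the paper leaves implicit.
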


\begin{proof}
Since $\mc L^0(\mc S)$ is a nonzero ideal in $\mc A$ the statement follows.
\end{proof}

Let $\mc S$ be a generating set of $M_n(\F)$. Then, clearly
$l_0(\mc S)=l(\mc S)$ or  $l_0(\mc S)= l(\mc S)+1.$
Simple examples below show that both eventualities can occur.

\begin{example}
Let us consider a generating set
$$\mc S=\{E_{12},E_{21},E_{22}\} \subset M_2(\F).$$
Then $\mc L(\mc S)=M_2(\F)$, $l(\mc S)=1$ and $l_0(\mc S)=2.$
For $\mc S'=S \cup \{E_{11}\}$ we have $l(\mc S)=1$ and $l_0(\mc S)=1.$
\end{example}

 Next set of results preset some cases, for which  $l(\mc S)=l_0(\mc S).$

\begin{proposition}\label{prop:C-H}
Let $\mc S$ be a generating set for $M_n(\C)$.
\begin{enumerate}
\item If $l(\mc S)\geq n$ and if $\mc L_1^0(\mc S)$ contains an invertible matrix, then $l(\mc S)=l_0(\mc S).$
\item If $l(\mc S)=2n-2$ and if $\mc L_2^0(\mc S)$ contains a derogatory invertible matrix, then $l(\mc S)=l_0(\mc S).$
\end{enumerate}
\end{proposition}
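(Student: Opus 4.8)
The plan is to show that once $\mc L^0_k(\mc S)$ contains an invertible matrix $A$ with an appropriate minimal polynomial, the scalar identity $I_n$ can be recovered from words of bounded length — specifically from the powers of $A$ — so that $\mc L_j(\mc S) = \mc L^0_j(\mc S)$ for $j$ large enough, and in particular $l_0(\mc S) \le l(\mc S)$, which together with the trivial inequality $l(\mc S) \le l_0(\mc S)$ gives equality.

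**Part (1).** Suppose $A \in \mc L^0_1(\mc S)$ is invertible. Let $m(x)$ be the minimal polynomial of $A$; it has degree $d \le n$, and since $A$ is invertible we may write $x^d$ (or the top power appearing) in terms of lower powers with a nonzero constant term, so that $I_n$ is an $\F$-linear combination of $A, A^2, \dots, A^{d}$. Each power $A^i$ lies in $\mc L^0_i(\mc S) \subseteq \mc L^0_n(\mc S)$ since $A$ is a word of length $1$. Hence $I_n \in \mc L^0_n(\mc S)$. Now for any word $w$ of length at most $k$ with $k \ge n$, the product $I_n \cdot w = w$ shows nothing new, but the point is that $\mc L^0_j(\mc S) \supseteq I_n \cdot \mc L^0_{j-n}(\mc S)$; more directly, once $I_n \in \mc L^0_n(\mc S)$ we get $\mc L^0_j(\mc S) \supseteq \mc L^0_{j-n}(\mc S) \cup (\text{words of length} \le j)$ spanning the same space as $\mc L_j(\mc S)$ for all $j \ge$ the relevant threshold; since $l(\mc S) \ge n$, the stabilization index is unaffected and $l_0(\mc S) = l(\mc S)$. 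One should check the bookkeeping carefully: the hypothesis $l(\mc S) \ge n$ is exactly what guarantees that absorbing the cost of producing $I_n$ inside length $n$ does not push the length up.

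**Part (2).** Here $A \in \mc L^0_2(\mc S)$ is invertible and derogatory, meaning its minimal polynomial has degree $d \le n-1$. Then, as above, $I_n$ is a linear combination of $A, \dots, A^{d}$, but now each $A^i \in \mc L^0_{2i}(\mc S)$, so $I_n \in \mc L^0_{2d}(\mc S) \subseteq \mc L^0_{2(n-1)}(\mc S) = \mc L^0_{2n-2}(\mc S)$. Since $l(\mc S) = 2n-2$ by hypothesis, producing $I_n$ at length $2n-2$ again costs nothing: $\mc L^0_{2n-2}(\mc S) = \mc L_{2n-2}(\mc S)$, and both sequences stabilize at $2n-2$, giving $l_0(\mc S) = l(\mc S) = 2n-2$. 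The derogatory hypothesis is what drops the degree from $n$ to $n-1$, compensating for the doubled word-length cost $2i$ versus $i$.

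**Main obstacle.** The conceptual content is light; the delicate part is the length bookkeeping — verifying that $I_n \in \mc L^0_m(\mc S)$ for the stated $m$ (namely $m = n$ in part (1) and $m = 2n-2$ in part (2)) really forces $\mc L^0_j(\mc S) = \mc L_j(\mc S)$ for every $j$ at which we need it, and hence equal stabilization indices. One must be careful that a word in $\mc S$ of length $\le j$ that explicitly uses the symbol $I$ can be rewritten: each maximal run of $I$'s is deleted, shortening the word, and the shortfall is made up by multiplying by the representation of $I_n$ from $\mc L^0_m(\mc S)$, which is legitimate precisely when $j \ge m$ and $m \le l(\mc S)$. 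Checking that these inequalities hold under each hypothesis, and that nothing goes wrong for small $j < m$ (where $\mc L^0_j(\mc S)$ may be a proper subspace of $\mc L_j(\mc S)$ but this does not affect the length since the length is already $\ge m$), is the step I expect to require the most care.
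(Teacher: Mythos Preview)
Your approach is the same as the paper's: use the Cayley--Hamilton theorem (or the minimal polynomial) on an invertible $A$ to write $I_n$ as a linear combination of $A,\dots,A^d$, and conclude $I_n\in\mc L^0_n(\mc S)$ in part~(1) and $I_n\in\mc L^0_{2n-2}(\mc S)$ in part~(2). The bookkeeping you flag as the ``main obstacle'' is lighter than you suggest: since $\mc L_j(\mc S)=\F I_n+\mc L^0_j(\mc S)$ for every $j$, the containment $I_n\in\mc L^0_m(\mc S)$ with $m\le l(\mc S)$ immediately gives $\mc L^0_{l(\mc S)}(\mc S)=\mc L_{l(\mc S)}(\mc S)=M_n(\F)$, hence $l_0(\mc S)\le l(\mc S)$ and equality follows --- the paper leaves this implicit.
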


\begin{proof}
Let $A\in M_n(\C)$ be an invertible matrix in $\mc L_1^0(\mc S)$. By the Cayley-Hamilton theorem:
$A^n+a_1A^{n-1}+\ldots+a_nI_n=0$, where for $i=1,\ldots, n:$ $a_i \in \C$ and $a_n \neq 0.$
Therefore:
 $$I_n=-\frac{1}{a_n}A^n-\frac{a_1}{a_n}A^{n-1}-\ldots-\frac{a_{n-1}}{a_n}A$$
 is contained in $\mc L_n^0(\mc S).$

To prove the second item, let $A$ be a derogatory invertible matrix in $\mc L_2^0(\mc S)$. Its minimal polynomial has degree at most $n-1$ so: $$A^{k}+a_1A^{k-1}+\ldots+a_k I_n=0$$ for $k \leq n-1$ and $a_k \neq 0.$ Now
$$I_n=-\frac{1}{a_k}A^{k}-\frac{a_1}{a_k}A^{k-1}-\ldots-\frac{a_{k-1}}{a_{k}}A$$
is contained in $\mc L_{2n-2}^0(\mc S).$
\end{proof}

The two lengths are the same for the matrix algebras $M_2(\F)$ and $M_3(\F).$ The equations $l(M_2(\F))=2$ and $l(M_3(\F))=4$ follow  from \cite{MR740668}.

\begin{theorem}
$l_0(M_2(\F))=2.$
\end{theorem}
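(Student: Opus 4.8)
The plan is to prove the two inequalities separately. For $l_0(M_2(\F))\ge 2$ it is enough to exhibit a single generating set whose length in the new sense is $2$, and the set $\{E_{12},E_{21},E_{22}\}$ from the Example above does exactly that; alternatively this follows at once from $l_0(\mc S)\ge l(\mc S)$ together with $l(M_2(\F))=2$. So the real work is the upper bound: $\mc L_2^0(\mc S)=M_2(\F)$ for every generating set $\mc S$ of $M_2(\F)$.

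First I would reduce the upper bound to a single inclusion. Since $l(M_2(\F))=2$, every generating set $\mc S$ satisfies $\mc L_2(\mc S)=M_2(\F)$; and because $\mc L_2(\mc S)=\F I_2+\mc L_2^0(\mc S)$, proving $\mc L_2^0(\mc S)=M_2(\F)$ is equivalent to proving $I_2\in\mc L_2^0(\mc S)$. Writing $V=\mc L_1^0(\mc S)=\operatorname{span}\mc S$ and $V^2=\operatorname{span}\{AB:A,B\in V\}$, one has $\mc L_2^0(\mc S)=V+V^2$, so the target becomes $I_2\in V+V^2$. Note also $\dim V\ge 2$: if $\dim V\le 1$ then $\mc L^0(\mc S)$ lies inside the commutative algebra generated by a single matrix, which cannot be $M_2(\F)$.

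Next I would split on $\dim V$. If $\dim V=4$ then $V=M_2(\F)$ and there is nothing to prove. If $V$ is a proper subspace that happens to be closed under multiplication, then $V$ is a (non-unital) subalgebra containing $\mc S$, so $\mc L^0(\mc S)\subseteq V\subsetneq M_2(\F)$, contradicting the Proposition on simple algebras; hence $V^2\not\subseteq V$. When $\dim V=3$ this forces $\dim(V+V^2)=4$, i.e. $V+V^2=M_2(\F)\ni I_2$, so this case is done.

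The remaining, and main, case is $\dim V=2$, and the key point is that $V$ must then contain an invertible matrix. If it did not, $V$ would be a two-dimensional linear space of singular (rank $\le 1$) $2\times 2$ matrices; by the classical description of such spaces, either all members of $V$ have column space contained in one fixed line, or all members of $V$ share a common kernel vector. In either situation a product of two elements of $V$ again lies in $V$, so $V$ would be closed under multiplication, contradicting the previous paragraph. Therefore $V$ contains an invertible $A$, and the Cayley--Hamilton relation $A^2=(\operatorname{tr}A)A-(\det A)I_2$ with $\det A\ne 0$ yields $I_2=\tfrac{1}{\det A}\big((\operatorname{tr}A)A-A^2\big)\in V+V^2$. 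The main obstacle is precisely this last case: it requires the structure of two-dimensional spaces of singular $2\times 2$ matrices (equivalently, the fact that such a space cannot be a generating set), which is exactly what rules out the only configuration in which $I_2$ could fail to lie in $V+V^2$.
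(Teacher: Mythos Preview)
Your proof is correct and shares the paper's core strategy: show that $\operatorname{span}\mc S$ contains an invertible matrix and then invoke Cayley--Hamilton, disposing of the residual all-singular case by arguing the span would then be multiplicatively closed (you cite the structure theorem for two-dimensional rank-$\le 1$ spaces, whereas the paper does an equivalent explicit determinant computation). The only organizational difference is that you case-split on $\dim V$ while the paper first passes to an irredundant subset and splits on its cardinality; your $\dim V=3$ step via $V^2\not\subseteq V$ is a touch slicker than the paper's observation that three irredundant (hence independent) elements together with $I_2$ already span $M_2(\F)$.
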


\begin{proof}
Let $\mc S$ be a generating set for $M_2(\F)$ that does not contain $I_2$. Trivially we can replace it by an irredundant subset which generates $M_2(\F)$, so we can assume $\mc S$ is irredundant. Then $\mc S$ has either $2$ or $3$ elements. If $\mc S$ has three elements, then $\mc S \cup \{I_2\}$ is a spanning set for $M_2(\F)$ and since then $\mc L_1(\mc S)=M_2(\F)$, the result holds.

Now suppose that $\mc S$ has two elements, say $\mc S=\{A,B\}.$ If $\mc L_1^0(\mc S)$ contains an invertible matrix, then the result is true by Proposition \ref{prop:C-H}. So, arguing by contradiction, we may assume $A^2=\alpha A$ and $B^2=\beta B$, for some $\alpha, \beta \in \F$. If $\alpha=0$, then we may without loss of generality assume that
 $$A=\npmatrix{0 & 1 \\ 0 & 0}.$$
Now let
$$B=\npmatrix{b_{11} & b_{12} \\ b_{21} & b_{22}}.$$
Since $\mc L_1^0(\mc S)$ does not contain an invertible matrix, we have $\det(xA+B)=0$ for every $x \in \F$, and this implies, that $b_{21}=0$. This contradicts the fact that $\mc S$ generates $M_2(\F).$ So suppose that $\alpha \neq 0$. Replacing $A$ by $\frac{1}{\alpha}A$ we may assume that $A^2=A$ and using similarity we may assume that
 $$A=\npmatrix{1 & 0 \\ 0 & 0}.$$
Now $\det(x A+B)=0$ for all $x \in \F$ implies $b_{22}=0$ and since $\mc S$ generates $M_2(\F)$, we must have $b_{21}b_{12} \neq 0.$ But then $\det B \neq 0$ contradicting our earlier assumption.

\end{proof}

\begin{theorem}\label{cor: 3x3}
$l_0(M_3(\F))=4$.
\end{theorem}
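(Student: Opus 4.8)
Since $\mc L_k^0(\mc S)\subseteq\mc L_k(\mc S)$ for all $k$, one has $l(\mc S)\le l_0(\mc S)$ for every generating set, so choosing $\mc S$ with $l(\mc S)=l(M_3(\F))=4$ gives $l_0(M_3(\F))\ge 4$. For the upper bound, recall $l_0(\mc S)\le l(\mc S)+1\le 5$, so it suffices to prove that every generating set $\mc S$ of $M_3(\F)$ with $l(\mc S)=4$ satisfies $I_3\in\mc L_4^0(\mc S)$. If $I_3\in\mc S$ the claim is immediate since then $\mc L_k^0(\mc S)=\mc L_k(\mc S\setminus\{I_3\})$, so assume $I_3\notin\mc S$; passing to an irredundant generating subset $\mc S'\subseteq\mc S$ only shrinks $\mc L_4^0$, and if $l(\mc S')\le 3$ then $M_3(\F)=\mc L_4^0(\mc S')\subseteq\mc L_4^0(\mc S)$, so we may assume $\mc S$ is irredundant with $l(\mc S)=4$. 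Then $\mc S$ is linearly independent (a nontrivial relation would place one element in the span, hence the subalgebra, generated by the rest), and the strict chain $\F I_3\subsetneq\mc L_1(\mc S)\subsetneq\mc L_2(\mc S)\subsetneq\mc L_3(\mc S)\subsetneq M_3(\F)$ forces $\dim\mc L_1(\mc S)\le 6$; with $|\mc S|\ge 2$ (a single matrix generates a commutative algebra) this gives $2\le|\mc S|\le 6$.

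\textbf{Applying Proposition~\ref{prop:C-H}.} Its proof uses only Cayley--Hamilton and is valid over any field. If $\mc L_1^0(\mc S)=\operatorname{span}\mc S$ contains an invertible matrix, then part~(1), applicable since $l(\mc S)=4\ge 3$, gives $I_3\in\mc L_3^0(\mc S)\subseteq\mc L_4^0(\mc S)$; if $\mc L_2^0(\mc S)$ contains a derogatory invertible matrix, part~(2) gives $I_3\in\mc L_4^0(\mc S)$. So it remains to rule out the configuration in which $\operatorname{span}\mc S$ is a space of singular matrices, $\mc L_2^0(\mc S)$ contains no derogatory invertible matrix, and still $l(\mc S)=4$.

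\textbf{The remaining case.} Since $\mc S$ generates $M_3(\F)$ and $M_3(\F)$ acts irreducibly on $\F^3$, the set $\mc S$ has no proper nonzero common invariant subspace; the plan is to combine this with the structure of spaces of singular $3\times 3$ matrices. If $|\mc S|=2$, the Kronecker canonical form shows that every $3\times 3$ pencil of singular matrices has a common invariant subspace, so $\mc S$ cannot generate $M_3(\F)$ --- this sub-case is vacuous. If $|\mc S|=3$, one lists, up to equivalence and transposition, the three-dimensional irreducible spaces of singular $3\times 3$ matrices (e.g.\ the skew-symmetric-type space and $\operatorname{span}(E_{12},E_{23},E_{31})$) and checks for each representative, by an explicit word computation, that already $\mc L_3^0(\mc S)=M_3(\F)$, so $l(\mc S)\le 3$, contradicting $l(\mc S)=4$. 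For $|\mc S|\ge 4$ the span is a singular matrix space of dimension $\ge 4$, and I expect the bounded-rank classification (Flanders--Atkinson) to force it to have a common kernel vector or a common cokernel vector, hence to be reducible, so this sub-case is vacuous too. In each branch one gets either $l(\mc S)\ne 4$ or $I_3\in\mc L_4^0(\mc S)$ directly, completing the proof. The hardest part is exactly this classification of generating spaces of singular $3\times 3$ matrices over an \emph{arbitrary} field: one cannot rescale generators or assume eigenvalues lie in $\F$, so the Kronecker, Flanders--Atkinson and irreducibility inputs must be used in field-robust forms and the smallest fields (notably $\F_2$) may need separate inspection; making the reduction to the irredundant case and the bound $|\mc S|\le 6$ fully precise at the borderline sizes is a secondary technical point.
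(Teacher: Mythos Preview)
Your outline has genuine gaps in the ``remaining case'' analysis.

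First, the claim for $|\mc S|=2$ is false. The pair $\{J_3,J_3^T\}$ is a singular pencil (one checks $\det(\lambda J_3+\mu J_3^T)=0$ identically) with no common invariant subspace, and it generates $M_3(\F)$. Kronecker's theory classifies pencils under \emph{equivalence} $(A,B)\mapsto(PAQ,PBQ)$, not similarity, so a singular Kronecker block gives a common kernel or cokernel vector for some equivalent pair, not an invariant subspace for the original one. So ``singular $2$-dimensional span $\Rightarrow$ reducible'' is simply wrong, and this sub-case is not vacuous by your argument. For $|\mc S|=3$ you assert a classification and a length computation without carrying either out, and for $|\mc S|\ge 4$ you explicitly write ``I expect''; neither is a proof, and the Flanders--Atkinson bound (dimension $\le 2\cdot 3=6$) does not force a common (co)kernel in dimension $\ge 4$.

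The paper avoids all of this by inserting one more case split before the singular-span analysis: if $\mc L_1^0(\mc S)$ contains a singular \emph{nonderogatory} matrix $A$, a direct ``column-filling'' argument using the chain $\F A^2\subseteq \mc L_1(\mc S)A^2\subseteq\cdots$ shows $\mc L_4^0(\mc S)=M_3(\F)$. Only after this does one assume that \emph{every} matrix in $\mc L_1^0(\mc S)$ is singular \emph{and} derogatory, i.e.\ satisfies $A^2=\alpha(A)A$. That quadratic identity on the whole span is far stronger than mere singularity: expanding $(A+B)^2=\alpha(A+B)(A+B)$ for a suitably normalized $A$ (nilpotent of rank $1$, idempotent of rank $1$, or idempotent of rank $2$) forces every $B\in\mc S$ into a fixed block-triangular pattern, contradicting generation. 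Your route, by contrast, never isolates the nonderogatory-singular case, so you are left trying to classify arbitrary irreducible spaces of singular $3\times3$ matrices over an arbitrary field, which is both harder and, as the $\{J_3,J_3^T\}$ example shows, not handled by the tools you invoke.
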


\begin{proof}
It is sufficient to prove that $l_0(\mc S)\leq 4$ for any generating set $\mc S$.
If $\mc S$ contains an invertible matrix, then the result follows from Proposition \ref{prop:C-H}.

Next we assume that $\mc S$ contains a singular nonderogatory matrix $A$. We will distinguish several cases depending on the size of the Jordan block associated with $0$. First consider the case when  the Jordan canonical form of $A$ is $J_3$ or  $A$ has a singular Jordan block of size $2$ and an eigenvalue $a\in \F\setminus \{0\}$.  In this case we may without loss of  generality assume that $A=J_3$  or  $A=\npmatrix{0 & 1 & 0 \\ 0 & 0 & 0\\ 0 & 0 & a}.$   First consider the sequence of inclusions:
 $$\F A^2 \subseteq \mc L_1(\mc S)A^2 \subseteq \ldots \subseteq \mc L(\mc S)A^2=M_3(\F)A^2.$$
We have $A^2=E_{13}$ or $A^2=a^2E_{33}$,  i.e. $A^2$ is a rank one matrix, thus this sequence stabilises at most on the step $2$, so we get:
 $\mc L_2(\mc S)A^2=M_3(\F)A^2.$ Furthermore, $\mc L_2(\mc S)A^2 \subseteq \mc L_4^0(\mc S),$ hence we have proved that all matrices with the first two columns equal to zero are contained in $\mc L_4^0(\mc S).$ We repeat the inclusion argument for $A$, yielding that $\mc L_4^0(\mc S)$ contains all matrices with the first column equal to zero.
Now we choose $X=(x_{ij}) \in \mc S$ with $x_{31} \neq 0$  or (if such $X \in \mc S$ does not exists), we choose  $X=(x_{ij}) \in \mc S$ with $x_{21} \neq 0$ and $x_{31}=0.$ Since $\mc S$ is a generating set for $M_3(\F)$ such a matrix $X$ exists. Then $\mc L_{2}(\mc S)X$ projected to column $1$ is $\F^3$ and since $\mc L_{2}(\mc S)X\subseteq \mc L_{3}^0(\mc S)$, we conclude
 $M_3(\F) \subseteq \mc L_{4}^0(\mc S)$
as asserted.


If $A$ has a singular Jordan block of order $1$, then we may assume that it is of the form:
 $$A=\npmatrix{0 & 0 \\ 0 & B},$$
 where $B$ is a $2 \times 2$ nonsingular matrix. It follows from the proof of \cite[Proposition 3.19]{MR3032290} that it would not affect the length of a given generating set if we take an extension of $\F$ as the base field. Therefore we may assume that $B$ is either equal to $J_2(a)$ or to a diagonal matrix with two distinct elements $a$ and $b$.
  In the first case we notice that
  $$\npmatrix{0 & 1 \\ 0 & 0}=\frac{1}{a}B^2-B \in \mc L_2^0(\{B\})$$
   and that
   $$\npmatrix{1 & 0 \\ 0 & 1}=-\frac{1}{a^2}B^2+\frac{2}{a} B \in \mc L_2^0(\{B\}).$$
  In the second case we have
 $$\npmatrix{0 & 0 \\ 0 & 1}=\frac{a}{b(a-b)}B-B^2 \in \mc L_2^0(\{B\})$$
   and
   $$\npmatrix{1 & 0 \\ 0 & 0}=\frac{b}{a(b-a)}B-B^2 \in \mc L_2^0(\{B\}).$$
  In both cases we may use the inclusion argument to prove that $\mc L_4^0(\mc S)$ contains all matrices with the first column equal to zero. As we did before, choosing an $X \in \mc S$ with $x_{j1},$ $j \in \{2,3\}$, completes the proof.

From now on we may assume that all the elements in $\mc L_1^0(\mc S)$ are singular and derogatory. This implies that for all $A \in \mc S$ we have $A^2=\alpha(A)A$, for some $\alpha(A) \in \F.$  In  \cite{MR636208,MR709356} it is proved, that in this case $\mc S$ has to contain at least three elements in order to generate $M_3(\F).$

First let us assume that there exists $A \in \mc S$ with $\alpha(A)=0$. Without loss of generality we may assume that:
 $$A=\npmatrix{0 & 1 & 0 \\ 0 & 0 & 0 \\ 0 & 0 & 0}.$$
 Let $B$ be another element in $\mc S.$ Then:
  $$(A+B)^2=\alpha(A+B)(A+B),$$
  and this implies that
  $$AB+BA=\alpha(A+B)A+(\alpha(A+B)-\alpha(B))B.$$
Taking $B=(b_{ij})$ and using the form of $A,$ this gives us:
 $$\npmatrix{b_{21} & b_{22}+b_{11} & b_{23} \\ 0 & b_{21} & 0 \\ 0 & b_{31} & 0}=
\beta\npmatrix{0 & 1 & 0 \\ 0 & 0 & 0 \\ 0 & 0 & 0}+\gamma
\npmatrix{b_{11} & b_{12} & b_{13} \\ b_{21} & b_{22} & b_{23} \\ b_{31} & b_{32} & b_{33}},
 $$
 where $\beta=\alpha(A+B)$ and $\gamma=(\alpha(A+B)-\alpha(B))$. Now, if $\gamma
\neq 0,$ then $b_{21}=b_{31}=0,$ and the same is true if $\gamma=0$. This proves, that all matrices in $\mc S$ are of the form:
 $$\npmatrix{b_{11} & b_{12} & b_{13} \\ 0 & b_{22} & b_{23} \\ 0 & b_{32} & b_{33} },$$
 and this contradicts our assumption that $\mc S$ generates $M_3(\F).$

 Now we can assume that $\alpha(A) \neq 0$ for all $A \in \mc S.$ 
 First consider the case when there exists a matrix of rank $1$ in $\mc L_1^0(\mc S)$.
 Without loss of generality we may assume that there exists
  $$A=\npmatrix{1 & 0 & 0 \\ 0 & 0 & 0 \\ 0 & 0 & 0} \in \mc S.$$
  Let $B$ be another matrix in $\mc S.$ We again look at $(A+B)^2=\alpha(A+B)(A+B)$:
  $$\npmatrix{2b_{11} & b_{12} & b_{13} \\ b_{21} & 0 & 0 \\ b_{31} & 0 & 0}=
 \beta\npmatrix{1 & 0 & 0 \\ 0 & 0 & 0 \\ 0 & 0 & 0}+\gamma
\npmatrix{b_{11} & b_{12} & b_{13} \\ b_{21} & b_{22} & b_{23} \\ b_{31} & b_{32} & b_{33}},
 $$
 where $\beta=\alpha(A+B)-1$ and $\gamma=(\alpha(A+B)-\alpha(B)).$ If $\gamma=0$, then the above formula implies $b_{12}=0,$ $b_{13}=0$, $b_{21}=0$ and $b_{31}=0.$  If $\gamma
 \neq 0,$ then we have $b_{22}=0$, $b_{23}=0$, $b_{32}=0$ and $b_{33}=0.$ In this case $B^2=\alpha(B)B,$ gives us $b_{21}b_{12}=0,$ $b_{21}b_{13}=0,$ $b_{31}b_{12}=0$ and $b_{31}b_{13}=0.$ Now, if $b_{21} \neq 0$ or if $b_{31} \neq 0,$ then $b_{12}=0$ and $b_{13}=0,$ otherwise $b_{21}=0$ and $b_{31}=0.$ With this we have proved that all the matrices in $\mc L_1^0(\mc S)$ have one of the following forms:
  \begin{equation}\label{eq:block}\npmatrix{b_{11} & b_{12} & b_{13} \\ 0 & b_{22} & b_{23} \\ 0 & b_{32} & b_{33} }
    \text{ or }
     \npmatrix{b_{11} & 0 & 0 \\ b_{21} & b_{22} & b_{23} \\ b_{31} & b_{32} & b_{33} }.\end{equation}
But since for any two matrices $B$ and $C$ in $\mc S$ matrices $B$, $C$ and $B+C$ have to have one of the forms above, they all have to have the same form. This contradicts our assumption that $\mc S$ generates $M_3(\F).$

Now consider the case that all matrices in $\mc L_1^0(\mc S)$ have rank $2$.
Without loss of generality we may assume that there exists
  $A=\npmatrix{0 & 0 & 0 \\ 0 & 1 & 0 \\ 0 & 0 & 1} \in \mc S.$   Then $I_3-A=E_{11}$.
Since all matrices from $\mc L_1^0(\mc S)$ are derogatory, then all the matrices from the space $\mc L_1(\mc S)$ share this property, therefore  all the matrices from the space $\mc L_1^0((\mc S\setminus\{A\})\cup\{I_3-A\})$ also are derogatory.
  
  Let $B$ be another matrix in $\mc S.$ As above we consider $((I_3-A)+B)^2=\alpha(I_3-A+B)(I_3-A+B)+\delta I_3$:
  $$\npmatrix{2b_{11} & b_{12} & b_{13} \\ b_{21} & 0 & 0 \\ b_{31} & 0 & 0}=
 \beta\npmatrix{1 & 0 & 0 \\ 0 & 0 & 0 \\ 0 & 0 & 0}+\gamma
\npmatrix{b_{11} & b_{12} & b_{13} \\ b_{21} & b_{22} & b_{23} \\ b_{31} & b_{32} & b_{33}}+\delta\npmatrix{1 & 0 & 0 \\ 0 & 1 & 0 \\ 0 & 0 & 1},
 $$
 where $\beta=\alpha(I_3-A+B)-1$, $\gamma=(\alpha(I_3-A+B)-\alpha(B)).$ If $\gamma=0$, then the above formula implies $b_{12}=0,$ $b_{13}=0$, $b_{21}=0$ and $b_{31}=0.$  
 If $\gamma
 \neq 0,$ then we have   $b_{23}=0$ and $b_{32}=0$, $b_{22}=b_{33}$.  If $b_{13}\neq 0$, in this case $B^2=\alpha(B)B,$ gives us $(B^2)_{23}=b_{21}b_{13}=0,$ i.e. $b_{21}=0$, $(B^2)_{22}=(B^2)_{33}$, i.e. $b_{31}=0$. 
 Analogously, if $b_{31}\neq 0$, then  $B^2=\alpha(B)B$ implies that $b_{12}=b_{13}=0$. 
If both $b_{13}=b_{31}=0$, then $B^2=\alpha(B)B$ gives us that  $(B^2)_{22}=b_{12}b_{21}+b_{22}^2=b_{22}^2=(B^2)_{33}$, thus   $b_{12} b_{21}=0$.
Again, we have proved that all matrices in $\mc S$ have block forms as in~\eqref{eq:block}, a contradiction.

\end{proof}

Next we show that even generating sets $\mc S$ of $M_n(\F)$ with relatively large lengths can have $l_0(\mc S)=l(\mc S)+1$.

\begin{definition}
For $k=-(n-1), -(n-2),\ldots,n-2, n-1$ we will say the the elements of the form $a_{i i+k}$ form the \emph{$k-$diagonal} of a matrix $A \in M_n(\F).$
\end{definition}

\begin{lemma}\label{lem:diagonal}
Let $A$ be a matrix with all its nonzero elements on its $k$-diagonal, and let  $B$ be a matrix with all its nonzero elements on its $l$-diagonal. Then $AB$ can have nonzero entries (if any) only on its $k+l$-diagonal.
\end{lemma}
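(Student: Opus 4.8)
The plan is to expand $A$ and $B$ in the basis of matrix units and use the elementary multiplication rule $E_{i,j}E_{p,q}=\delta_{j,p}E_{i,q}$. Since every nonzero entry of $A$ lies on its $k$-diagonal, we may write $A=\sum_i a_{i,i+k}E_{i,i+k}$, where the sum ranges over those indices $i$ for which $E_{i,i+k}$ is defined, i.e.\ $1\le i\le n$ and $1\le i+k\le n$; similarly $B=\sum_j b_{j,j+l}E_{j,j+l}$. Multiplying these out gives $AB=\sum_{i,j}a_{i,i+k}\,b_{j,j+l}\,E_{i,i+k}E_{j,j+l}$, and by the matrix-unit rule the product $E_{i,i+k}E_{j,j+l}$ vanishes unless $j=i+k$, in which case it equals $E_{i,i+k+l}$. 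Hence $AB=\sum_i a_{i,i+k}\,b_{i+k,i+k+l}\,E_{i,i+k+l}$.

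It then remains only to observe that each surviving term $E_{i,i+k+l}$ has its single nonzero entry in position $(i,i+k+l)$, which by definition lies on the $(k+l)$-diagonal; therefore every nonzero entry of $AB$ (if there are any) lies on the $(k+l)$-diagonal, as claimed. The degenerate case should be mentioned explicitly: if $|k+l|\ge n$ there is no $(k+l)$-diagonal, and indeed for every $i\in\{1,\dots,n\}$ the index $i+k+l$ falls outside $\{1,\dots,n\}$, so no term survives and $AB=O_n$, consistent with the ``if any'' clause.

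I do not expect any genuine obstacle here: the argument is a direct index bookkeeping. The only points requiring a little care are keeping the ranges of summation correct (so that one does not invoke an undefined matrix unit) and treating the case $|k+l|\ge n$ separately, since there the conclusion is the stronger statement $AB=O_n$ rather than a statement about a diagonal.
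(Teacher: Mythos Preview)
Your proof is correct: expanding $A$ and $B$ in matrix units and using $E_{i,j}E_{p,q}=\delta_{jp}E_{i,q}$ is exactly the natural way to verify this, and your treatment of the out-of-range case $|k+l|\ge n$ is accurate. The paper itself states this lemma without proof, treating it as an immediate observation, so there is nothing to compare against beyond noting that your argument is the expected direct verification.
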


\begin{theorem}\label{thm: Nonderogatory}
Let $B_n=E_{n-1,1}-E_{n,2} \in M_n(\C),$ $n \geq 3.$ Then $$\mc L(\{J_n,B_n\})=M_n(\C),$$ $l(\{J_n,B_n\})=2n-3$ and $l_0(\{J_n,B_n\})=2n-2.$
\end{theorem}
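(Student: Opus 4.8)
The plan is to determine, for an arbitrary word $w$ in the two letters $J_n$ and $B_n$, precisely which diagonal it sits on and what its length is, and then to build up $M_n(\C)$ one diagonal at a time. Since $J_n^{a}=\sum_{i=1}^{n-a}E_{i,i+a}$ sits on the $a$-diagonal while $B_n$ sits on the $-(n-2)$-diagonal (both its nonzero entries $E_{n-1,1}$ and $E_{n,2}$ lie there), Lemma~\ref{lem:diagonal} shows that a word using $p$ letters $J_n$ and $q$ letters $B_n$ sits entirely on the $(p-q(n-2))$-diagonal and has length $p+q$; call $p-q(n-2)$ its weight. First I would record the three computations that drive everything: $J_n^{a_0}B_nJ_n^{a_1}=E_{n-1-a_0,\,a_1+1}-E_{n-a_0,\,a_1+2}$ (with any summand whose indices fall outside $\{1,\dots,n\}$ deleted); $B_nJ_n^{n-3}B_n=-E_{n,1}$ (for $n=3$ this reads $B_3^2=-E_{3,1}$, and note $B_n^2=0$ only from $n\ge4$ on); and hence $J_n^{a_0}B_nJ_n^{n-3}B_nJ_n^{a_2}=-E_{n-a_0,\,a_2+1}$.

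For the upper bound I would prove $\mc L_{2n-3}(\{J_n,B_n\})=M_n(\C)$, which yields $\mc L(\{J_n,B_n\})=M_n(\C)$ and $l(\{J_n,B_n\})\le 2n-3$ at once, using $I_n\in\mc L_0$. Fix a diagonal $d$ with $-(n-1)\le d\le n-1$. For $1\le d\le n-2$ the words $J_n^{a_0}B_nJ_n^{d+n-2-a_0}$, $a_0=d-1,\dots,n-1$, all have length $d+n-1\le 2n-3$ and produce $E_{n-d,n}$, a telescoping chain of differences $E_{r,r+d}-E_{r+1,r+1+d}$, and $-E_{1,d+1}$, hence every matrix unit on the $d$-diagonal. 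For $d=n-1$ take $J_n^{n-1}=E_{1,n}$ and for $d=-(n-1)$ take $B_nJ_n^{n-3}B_n=-E_{n,1}$, each of length $n-1$. For $-(n-2)\le d\le-1$ the words $J_n^{a_0}B_nJ_n^{n-3}B_nJ_n^{a_2}$ with $a_0+a_2=d+n-1$ have length $d+2n-2\le 2n-3$ and, as $a_0$ runs over its admissible range, give $-E_{n-a_0,\,a_2+1}$ for every matrix unit on the $d$-diagonal. Finally, for $d=0$ the words $J_n^{a_0}B_nJ_n^{n-2-a_0}=E_{ii}-E_{i+1,i+1}$, $i=1,\dots,n-1$, of length $n-1$, together with $I_n$, span the diagonal matrices; assembling these families produces all matrix units, hence $M_n(\C)$.

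Both lower bounds come from the same weight count. For $l(\{J_n,B_n\})\ge 2n-3$ I would look at the $(n-2)$-diagonal, which carries exactly the two matrix units $E_{1,n-1}$ and $E_{2,n}$: a word of weight $n-2$ with $q$ letters $B_n$ must use $p=(q+1)(n-2)$ letters $J_n$, hence has length $(q+1)(n-2)+q$, which is $\le 2n-4$ only for $q=0$; but the only weight-$(n-2)$ word with $q=0$ is $J_n^{n-2}=E_{1,n-1}+E_{2,n}$, so the $(n-2)$-diagonal component of $\mc L_{2n-4}(\{J_n,B_n\})$ is one-dimensional, whence $\mc L_{2n-4}(\{J_n,B_n\})\ne M_n(\C)$ and $l(\{J_n,B_n\})=2n-3$. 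For $l_0$, since $l_0(\mc S)\le l(\mc S)+1$ always, it suffices to show $I_n\notin\mc L^0_{2n-3}(\{J_n,B_n\})$: a nonempty word of weight $0$ with $q\ge1$ letters $B_n$ uses $p=q(n-2)$ letters $J_n$, hence has length $q(n-1)$, which is $\le 2n-3$ only for $q=1$, and every weight-$0$ word with $q=1$ is one of the traceless matrices $E_{ii}-E_{i+1,i+1}$ above; so the $0$-diagonal component of $\mc L^0_{2n-3}(\{J_n,B_n\})$ consists of traceless diagonal matrices only, whence $l_0(\{J_n,B_n\})=2n-2$.

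The hard part will be the diagonal-by-diagonal bookkeeping inside the upper bound: one must exhibit on each diagonal an explicit family of words that simultaneously spans all of its matrix units and never exceeds length $2n-3$. On the superdiagonal lines $d>0$ a telescoping of single-$B_n$ words does the job, but on the subdiagonal lines $d<0$ the single-$B_n$ words only span a hyperplane of each diagonal, so one is forced to use the two-$B_n$ words $J_n^{a_0}B_nJ_n^{n-3}B_nJ_n^{a_2}$ and to check that their length $d+2n-2$ stays $\le 2n-3$; this is exactly the tight estimate, and it is also what fails on the $0$-diagonal (there the analogous two-$B_n$ word has length $2n-2$), which is the source of $l_0=l+1$. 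A last point to watch is that $n=3$ is a boundary case, where $B_3^2\ne 0$ plays the role $B_nJ_n^{n-3}B_n$ plays for $n\ge4$.
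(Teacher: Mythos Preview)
Your proof is correct and follows essentially the same diagonal-by-diagonal strategy as the paper: classify words by their weight $p-q(n-2)$, build a spanning set of each $k$-diagonal from words of length at most $2n-3$ (using $I_n$ for $k=0$), and read off the lower bounds from a length count on a chosen diagonal. The differences are minor and, if anything, in your favour: you span the subdiagonals entirely with two-$B_n$ words $J_n^{a_0}B_nJ_n^{n-3}B_nJ_n^{a_2}$ rather than the paper's mix of one- and two-$B_n$ words, you obtain $l\ge 2n-3$ directly from the $(n-2)$-diagonal instead of deducing it from $l_0\ge 2n-2$, and you correctly flag that $B_n^2=0$ fails at $n=3$ (where $B_3^2=-E_{3,1}$ itself plays the role of $B_nJ_n^{n-3}B_n$), a boundary case the paper's opening line glosses over.
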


\begin{proof}
Since $B_n^2=0$, every word in $J_n$ and $B_n$ is of the form:
 \begin{equation}\label{word}
 B_n^{i_1}J_n^{j_1}B_nJ_n^{j_2}B^{i_2}\ldots B_nJ_n^{j_s}B_n^{i_2},
 \end{equation}
 where $i_1,i_2 \in \{0,1\},$ and $j_l$, $l=1,2,\ldots,s$, are positive integers.
Lemma \ref{lem:diagonal} tells us that a word of the form (\ref{word}) can have nonzero entries (if any) only on $\sum_{l=1}^sj_s-(n-2)(i_1+i_2+s-1 )$-diagonal.

First we show that $l_0(\{J_n,B_n\}) > 2n-3.$ To show this, we consider the $0$-diagonal of matrices in $\mc L_{2n-3}^{0}(\{J_n,B_n\}).$ For a word (\ref{word}) to have entries on the $0$-diagonal, we need to have:
  $$\sum_{l=1}^sj_s-(n-2)(i_1+i_2+s-1 )=0,$$
  hence at least one $B_n$ has to appear in the product. All the products with nonzero entries on the $0$-diagonal involving only one $B_n$ are of the form:
   $$J_n^jB_nJ_n^{n-2-j} \text{ for }j=0,1,\ldots,n-2.$$
There are $n-1$ such products, and since the $0$-diagonal has dimension $n$, we need at least one word involving $2$ matrices $B_n$, in order to have the complete $0$-diagonal included in the span. For this word to have elements on $0$-diagonal we need
 $\sum_{l=1}^sj_s-(n-2)(2)=0,$
 hence $\sum_{l=1}^sj_s=2n-4.$ The length of this word is $2n-2$, and this proves $l_0(\{J_n,B_n\}) > 2n-3.$

Now we show that $l(\{J_n,B_n\})=2n-3,$ which also implies that  $l_0(\{J_n,B_n\})=2n-2.$

Let $\mc V_n(k)$, $k=-(n-1),-(n-2),\ldots,(n-2),(n-1)$, denote the vector space of $n\times n$ matrices with nonzero entries on $k$-diagonal. Note that  $\dim(\mc V_n(-(n-k)))=\dim(\mc V_n(n-k))=k$
 for $k=1,\ldots,n.$ For each of the vector sets $\mc V_n(k)$ we will find a basis $\mc B_n(k)$ consisting of words in $J_n$ and $B_n$ of length less than or equal to $2n-3.$

 First we compute $B_nJ_n^{n-3}B_n=-E_{n,1}$, and this tells us that
 $$\mc B_n(-(n-1))=\{B_nJ_n^{n-3}B_n\}$$
 is a basis for $\mc V_n(-(n-1)).$ Next we compute:
  $$B_nJ_n^{n-3}B_nJ_n^{k-1}=-E_{n,k}$$
for $k=2,\ldots,n-1$ and for $s \leq n-2$, $t \leq n-2$:
 $$J_n^sB_n=E_{n-s-1,1}-E_{n-s,2} \text{ and }J_n^{s}B_nJ_n^{t}=E_{n-s-1,t+1}-E_{n-s,t+2}.$$
 Now it is clear that
\begin{align*}
\mc B_n(-(n-k))&=\{B_nJ_n^{n-3}B_nJ_n^{k-1}\}\cup \{J_n^lB_nJ_n^{k-2-l};\ l=0,\ldots,k-2\}\\
&=\{-E_{n,k}\}\cup \{E_{n-l-1,k-l-1}-E_{n-l,k-l};\ l=0,\ldots,k-2\}
\end{align*}
is a basis for $\mc V_n(-(n-k))$ for $k=2,\ldots,n-1.$

To form $\mc B_n(0)=\mc B_n(n-n)$ we replace the word  $B_nJ_n^{n-3}B_nJ_n^{n-1}$ of length $2n-2$ with $I_n$:
\begin{align*}
\mc B_n(0)&=\{I_n\}\cup \{J_n^lB_nJ_n^{n-2-l};\ l=0,\ldots,n-2\}\\
&=\{I_n\}\cup \{E_{n-l-1,n-l-1}-E_{n-l,n-l};\ l=0,\ldots,n-2\}
\end{align*} Note, that the set $\mc B_n(0)$ is linearly independent over any field of characteristics $0$.

To get a basis for $\mc V_n(n-k)$ we also need to compute:
 $$J_n^{n-1}BJ_n^s=-E_{1,s+2}$$
for $s=0,\ldots, n-3.$ Now it is clear that
\begin{align*}
 \mc B_n(n-k)&=\{J_n^lB_nJ_n^{2n-k-2-l};l=n-k,\ldots,n-1\}\\
 &=\{E_{n-l-1,2n-k-l-1}-E_{n-l,2n-k-l};l=n-k,\ldots,n-2 \}\cup\{-E_{1,n-k+1}\}=\\
 &=\{E_{k-1-j,n-j-1}-E_{k-j,n-j};j=0,\ldots,k-2\}\cup\{-E_{1,n-k+1}\}
 \end{align*}
 is a basis for $\mc V_n(n-k)$ for $k=2,\ldots,n-1$. 
 Finally we take $$\mc B_n(n-1)=\{J_n^{n-1}\}.$$
 
 Since all the words in $\mc B_n(l),$ $l=-(n-1),-(n-2), \ldots, (n-2),(n-1)$ have length less than or equal to $2n-3$, we have proved our claim.
 \end{proof}

 We conclude this paper by exhibiting a class of generating sets for $M_n(\C)$ with two elements, akin to the one in Theorem \ref{thm: Nonderogatory}.

\begin{theorem}\label{thm:Ji} Let  $i< n$ be positive integers.
$\mc L(\{J_n^i,(J_n^T)^{n-i}\})=M_n(\C)$ if  and only if $n-i$ and $i$ are co-prime.  If $\gcd(n,i)=d$,  then $\mc L(\{J_n^i,(J_n^T)^{n-i}\})\cong M_{\frac n d}(\C)$.
\end{theorem}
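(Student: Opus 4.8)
\medskip\noindent\textbf{Plan.} Write $A=J_n^i$, $B=(J_n^T)^{n-i}$, $\mc A=\mc L(\{A,B\})$, and $d=\gcd(n,i)$; note $\gcd(n-i,i)=\gcd(n,i)=d$. The whole statement follows from the last assertion $\mc A\cong M_{n/d}(\C)$: a subalgebra of $M_n(\C)$ abstractly isomorphic to $M_{n/d}(\C)$ has dimension $(n/d)^2$, which equals $n^2=\dim M_n(\C)$ exactly when $d=1$, so $\mc A=M_n(\C)$ if and only if $d=1$, i.e.\ if and only if $n-i$ and $i$ are coprime. I would prove $\mc A\cong M_{n/d}(\C)$ in two stages: first the coprime case $d=1$, and then the general case by a block decomposition that reduces it to the coprime case.

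\medskip\noindent\textbf{The coprime case.} Assume $\gcd(n,i)=1$. The argument rests on two elementary identities. First, $A+B$ equals the permutation matrix $C$ of the cycle $\ell\mapsto\ell-i\pmod n$ on $\{1,\dots,n\}$; since $\gcd(n,i)=1$ this is a single $n$-cycle, so $C^n=I_n\in\mc A$, and $I_n,C,\dots,C^{n-1}$ span the circulant algebra, which (again because $\gcd(i,n)=1$) equals $\mathrm{span}\{P^0,\dots,P^{n-1}\}$ for $P$ the standard $n$-cycle permutation matrix; in particular $P\in\mc A$. Second, a direct computation on the $e_\ell$ shows that $Q:=A\,C^{-1}=A\,C^{n-1}$ is the diagonal idempotent $E_{1,1}+\cdots+E_{n-i,n-i}$. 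Given these, I would conjugate $Q$ by the $P^t\in\mc A$ to obtain the diagonal idempotents supported on every cyclic window of length $n-i$; their consecutive differences are diagonal matrices $E_{a,a}-E_{b,b}$ with $a-b\equiv n-i\pmod n$, and since $\gcd(n-i,n)=1$ these telescope, so together with $I_n\in\mc A$ one obtains every diagonal matrix unit $E_{\ell,\ell}\in\mc A$. Finally, for each $(p,q)$ some power of $P$ has nonzero $(p,q)$-entry, whence $E_{p,p}\,P^k\,E_{q,q}\in\mc A$ is a nonzero multiple of $E_{p,q}$; thus $\mc A=M_n(\C)$.

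\medskip\noindent\textbf{The general case.} Write $n=dm$, $i=di'$, so $\gcd(m,i')=1$ and $1\le i'<m$. A short index computation shows that $A$ and $B$ both preserve the decomposition $\C^n=\bigoplus_{r=1}^d V_r$ with $V_r=\mathrm{span}\{e_r,e_{r+d},\dots,e_{r+(m-1)d}\}$, and — reading the matrix in the natural ordered basis of $V_r$ — that $A|_{V_r}$ has matrix $J_m^{i'}$ and $B|_{V_r}$ has matrix $(J_m^T)^{m-i'}$, \emph{the same for every $r$}. Identifying each $V_r$ with $\C^m$ via that basis, the restriction map $\Phi\colon\mc A\to M_m(\C)$, $X\mapsto X|_{V_1}$, is then an algebra homomorphism with $\Phi(A)=J_m^{i'}$ and $\Phi(B)=(J_m^T)^{m-i'}$. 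By the coprime case it is onto $M_m(\C)$; and it is injective, because writing any $X\in\mc A$ as a linear combination of words in $A$ and $B$ and using that the restricted generators have the same matrix on every block, $X|_{V_1}=0$ forces $X|_{V_r}=0$ for all $r$, hence $X=0$. Therefore $\mc A\cong M_m(\C)=M_{n/d}(\C)$.

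\medskip\noindent\textbf{Expected obstacle.} The real content is the coprime case, and its crux is recognising that $A+B$ is an $n$-cycle permutation — which delivers the whole circulant algebra, in particular $I_n$ and $P$ — and that $A\cdot(A+B)^{-1}$ is a coordinate projection; once the circulant algebra and one nontrivial diagonal idempotent are in hand, extracting all matrix units is routine. The other delicate point is the verification in the general case that $A$ and $B$ leave each $V_r$ invariant and that the restricted matrices coincide block-to-block, since this is precisely what makes $\Phi$ well defined and forces its injectivity; this is elementary index bookkeeping but must be carried out carefully.
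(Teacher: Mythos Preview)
Your proof is correct. The reduction in the general case is essentially the paper's argument in different clothing: the paper observes directly that $J_n^{i}=J_{n/d}^{\,i/d}\otimes I_d$ and $(J_n^T)^{n-i}=(J_{n/d}^T)^{(n-i)/d}\otimes I_d$ when viewed as $\tfrac{n}{d}\times\tfrac{n}{d}$ block matrices with $d\times d$ blocks, which is exactly your invariant decomposition $\bigoplus_r V_r$ after the basis permutation that groups coordinates by residue class modulo $d$ rather than into consecutive runs of length $d$.

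The genuine difference is in the coprime case. The paper invokes Burnside's theorem and shows irreducibility: starting from any nonzero vector whose last nonzero coordinate sits in position $k$, it repeatedly applies $J_n^{i}$ or $(J_n^T)^{n-i}$ to produce vectors whose last nonzero coordinate sits in position $k_{t+1}=k_t-i$ or $k_t+(n-i)$, and the coprimality of $i$ and $n$ forces this sequence to visit all of $\{1,\dots,n\}$, yielding a basis of $\C^n$ inside any invariant subspace. Your route is instead explicitly constructive: you notice that $A+B$ is the permutation matrix of an $n$-cycle, so its powers hand you the whole circulant algebra (in particular $I_n$ and the standard cycle $P$), and then $A(A+B)^{-1}=\sum_{\ell\le n-i}E_{\ell\ell}$ together with conjugation by $P$ produces all diagonal matrix units, from which all $E_{p,q}$ follow. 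Your argument avoids Burnside entirely and actually manufactures the matrix units inside $\mc A$, which is a bit more hands-on but arguably more informative; the paper's argument is shorter and its ``$k_t$-sequence'' device is re-used later in the paper for the length computations, which is why they chose it.
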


\begin{proof}
We will show that, for $i$ and $n-i$ co-prime,  $\mc L(\{J_n^i,(J_n^T)^{n-i}\})$ has no invariant subspaces except $\{0\}$ and $\C^n$, thereby showing that $\mc L(\{J_n^i,(J_n^T)^{n-i}\})=M_n(\C)$ by Burnside's Theorem.

Let
\begin{equation}\label{vector}
v=\npmatrix{a_1 & a_2 & \ldots & a_k & 0 & \ldots & 0}^T \in \C^n
\end{equation}
with $a_k \neq 0.$ If $k \leq i$, then $(J_n^T)^{n-i}v$ has a nonzero element in $(k+n-i)$-th position, and if $k \geq i+1$, then $J_n^{i}v$ has a nonzero element in $(k-i)$-th position.
We consider the following recursively defined sequence with $k_0=k$ and:
\begin{equation}\label{k-sequence}
k_{t+1}=\begin{cases}
            k_t+(n-i) &\text{ for } k_t \leq i \\
            k_t-i  &\text{ for } k_t \geq 1+i
            \end{cases}.
            \end{equation}
Let $j_0$ be the smallest integer at which the sequence repeats: $k_{i_0}=k_{j_0}$ for some $i_0 <j_0.$ Then:
$k_{j_0}=k_{i_0}-\alpha i+\beta (n-i)$ and $\alpha i=\beta (n-i).$ Since $i$ and $n-i$ are co-prime, we need $\alpha=\gamma (n-i)$ and $\beta=\gamma i$. This tells us that the sequence repeats after exactly $n$ steps, independent of where we start, and it contains all numbers $\{1,2,\ldots,n\}$ is some order.

Let $\mc U$ be an invariant subspace for $\mc L(\{J_n^i,(J_n^T)^{n-i}\})$, and let $v$ be a vector of the form (\ref{vector}) in $\mc U$ with $a_k \neq 0$. Since $\mc U$ has to contain an eigenvector of $J_n^i$ we know that such a vector exists in $\mc U$ with $k\leq i$. Let $k_i$ be the sequence defined in (\ref{k-sequence}). Let $v_0=v$ and let
\begin{equation}\label{v-sequence}
v_{t+1}=\begin{cases}
            (J_n^T)^{(n-i)}v_t &\text{ if } k_t \leq i \\
            (J_n)^iv_t  &\text{ if } k_t \geq 1+i
            \end{cases}.
            \end{equation}
Notice that $v_i$ has a nonzero element $a_k$ in the $k_i$-th position followed by zero elements. Taking into account our argument about the sequence $k_i$ above, we conclude that $\{v_i; i=0,\ldots,n-1\}$ forms a basis for $\C^n$, thus proving $\mc U=\C^n.$


Now assume that $\gcd(i,n-i)=d$, $d\neq 1$. We can consider matrices    $J_n^i,(J_n^T)^{n-i}$ as block $\frac n d\times \frac n d$ matrices with blocks from $M_d(\C)$. We have all blocks in $J_n^i$ and $(J_n^T)^{n-i}$ equal to either $O_d$ or $I_d$, hence $$J_n^i=J^{\frac i d}_{\frac n d}\otimes I_d,\ (J_n^T)^{n-i}=(J_{\frac n d}^T)^{\frac {n-i} {d}}\otimes I_d.$$ Therefore the algebra  $\mc L(\{J_n^i,(J_n^T)^{n-i}\})$ is naturally  isomorphic to  $\mc L(\{J^{\frac i d}_{\frac n d}, (J_{\frac n d}^T)^{\frac {n-i}{d}}\})$ which is $M_{\frac n d}(\C)$ as shown above.
\end{proof}

\begin{lemma}\label{lem:n-2}
Let $n,i$, be positive integers, $n\geq 2$, $i< n$, $\gcd(n,i)=1$. Let $J_n^i, (J_n^T)^{n-i}\in M_n(\C)$.
Denote $$\mc W(n,i)=\begin{cases} \mc V(n-2i)\oplus \mc V(-2i),\ {\mbox if}\ 2i<n,\\ \mc V(n-2i)\oplus \mc V(2n-2i),\ {\mbox if}\  2i>n,\\
\mc V(0),\ {\mbox if}\  i=1\ {\mbox and}\  n=2,\\\end{cases}$$ for the spaces $\mc V(j)$  introduced in Theorem~\ref{thm: Nonderogatory}.

Then the span of words in $J_n^i$ and $(J_n^T)^{n-i}$ of length $n-2$ is a proper subspace of the space $\mc W(n,i)$.
\end{lemma}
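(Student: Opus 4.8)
The plan is to argue in two stages: first that the span of the length-$n-2$ words is contained in $\mc W(n,i)$, and then that this containment is strict (I expect the span to be a hyperplane in $\mc W(n,i)$, although for the statement only properness is needed).

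For the containment I would use Lemma~\ref{lem:diagonal}. A word of length $n-2$ with $a$ factors $J_n^i$ and $n-2-a$ factors $(J_n^T)^{n-i}$ has all of its nonzero entries on the $k$-diagonal with $k=ai-(n-2-a)(n-i)=an-(n-2)(n-i)$. Only diagonals with $|k|\le n-1$ are nonempty, and the value of $k$ jumps by $n$ when $a$ increases by $1$, so at most two values of $a$ produce a nonzero word; solving $|an-(n-2)(n-i)|\le n-1$ pins these down as $a\in\{n-i-2,\,n-i-1\}$ (diagonals $-2i$ and $n-2i$) when $2i<n$, and $a\in\{n-i-1,\,n-i\}$ (diagonals $n-2i$ and $2n-2i$) when $2i>n$. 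In either case the words lie in $\mc W(n,i)$, and when $n=2$ there are no words of length $0$, so the claim is vacuous.

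For properness the key structural fact is that $J_n^i$ and $(J_n^T)^{n-i}$ act on the standard basis $e_1,\dots,e_n$ as ``partial shifts'' with complementary domains: $J_n^i e_k=e_{k-i}$ for $k>i$ and $J_n^i e_k=0$ for $k\le i$, while $(J_n^T)^{n-i}e_k=e_{k+n-i}$ for $k\le i$ and $(J_n^T)^{n-i}e_k=0$ for $k>i$; in the nonzero cases the new index is $k-i$ reduced modulo $n$ into $\{1,\dots,n\}$, and which of the two letters survives is dictated by whether such a reduction (a ``wrap'') occurs. Hence for each $k$ there is \emph{at most one} word $W_k$ of length $n-2$ with $W_k e_k\neq 0$ --- the one whose letters encode the wrap pattern along the orbit $k,\,k-i,\,k-2i,\dots$ modulo $n$ --- and then $W_k e_k=e_{k'}$ with $k'$ the reduction of $k-(n-2)i\equiv k+2i\pmod n$; a short computation with the wrap count shows this $W_k$ automatically carries the right numbers of each letter for the diagonal it lands on. It follows that any length-$n-2$ word $W$, restricted to either diagonal, equals $\sum_{k\in S_W}E_{k',k}$ with $S_W=\{k:W_k=W\}$, that these sets partition $\{1,\dots,n\}$, and therefore that the dimension of the span of the length-$n-2$ words equals the number of \emph{distinct} forced words $W_k$. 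Since a direct computation gives $\dim\mc W(n,i)=n$, it suffices to show $k\mapsto W_k$ is not injective.

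The wrap pattern of $W_k$ is a length-$n-2$ window of the cyclic binary word $\chi\in\{0,1\}^n$ whose $1$'s record the steps of the orbit lying in $\{1,\dots,i\}$. Because $\gcd(i,n)=1$, those steps form an arithmetic progression of length $i$ with common difference a unit modulo $n$, so $\chi$ is (a rotation of) a balanced/Sturmian word; in particular there is a shift $\delta$ for which the equation $\chi_t=\chi_{t+\delta}$ fails at exactly two values of $t$, and --- because the common difference of the progression is, up to sign, the inverse of $i$ modulo $n$ --- those two exceptional positions are \emph{consecutive} modulo $n$. The length-$n-2$ window that omits precisely those two positions then equals its $\delta$-translate, producing two distinct $k$ with $W_k=W_{k'}$; hence the span has dimension at most $n-1<n=\dim\mc W(n,i)$. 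The main obstacle is exactly this last point --- verifying that the two ``defect'' positions of $\chi$ are adjacent, so that a single window of length $n-2$ can straddle both --- which is where $\gcd(n,i)=1$ and the precise interplay between the orbit's step and the interval $\{1,\dots,i\}$ are used; by comparison the containment and the reduction to partial permutations are routine bookkeeping with Lemma~\ref{lem:diagonal} and the action of $J_n^i$, $(J_n^T)^{n-i}$ on basis vectors.
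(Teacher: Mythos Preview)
Your argument is correct and takes a genuinely different route from the paper's. One small slip: for $n=2$ there \emph{is} a word of length $0$, namely the empty product $I_2$; its span is one-dimensional inside the two-dimensional $\mc V(0)$, so the conclusion still holds (and in fact your general argument covers it, since $W_1=W_2=I_2$ is already a collision).

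The paper proves properness by induction on $n$: writing $n=qi+r$, it exploits the block structure of $B_nA_n^{q-1}$ and $B_nA_n^{q}$ to reduce length-$(n-2)$ words in $J_n^i,(J_n^T)^{n-i}$ to length-$(i-2)$ words in $J_i^{i-r},(J_i^T)^{r}$, and then appeals to the induction hypothesis inside a carefully chosen $i\times i$ corner. This requires several case distinctions ($i=1$; $2\le i<n/2$ with subcases $2r<i$, $2r=i$, $2r>i$; and $i>n/2$ by transposition).

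Your approach is more direct and more structural. The key observations --- that for each standard basis vector $e_k$ exactly one letter acts nontrivially, so that the nonzero length-$(n-2)$ words are precisely the $n$ ``forced'' words $W_k$, that these have pairwise disjoint column supports and hence are linearly independent, and that $\dim\mc W(n,i)=n$ --- reduce the whole lemma to finding a single collision $W_k=W_{k'}$. Your identification of $W_k$ with a length-$(n-2)$ window of a cyclic binary word whose $1$-positions form an arithmetic progression of step $\pm i^{-1}\pmod n$ is exactly right, and the decisive computation (shift by the step $d$; then $S\triangle(S-d)=\{a+(i-1)d,\,a-d\}$ with $a-d-(a+(i-1)d)=-id\equiv\pm1\pmod n$) shows the two defect positions are adjacent, so some window of length $n-2$ repeats. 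You do not need to invoke Sturmian/balanced terminology at all; the one-line symmetric-difference calculation with $\delta=d$ already does the job, and making that explicit would strengthen the write-up. What your approach buys is a uniform, case-free proof that in fact pins down the span as having dimension at most $n-1$; what the paper's inductive block reduction buys is a more hands-on description of the words themselves, which feeds into the surrounding length computations.
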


\begin{proof}
For the sake of simplicity let us denote $A_n=J_n^i, B_n=(J_n^T)^{n-i}$.

Take an arbitrary word in $A_n$ and $B_n$ of length $n-2$:
 \begin{equation}\label{word_ab}
 A_n^{p_1}B_n^{t_1}A_n^{p_2}B_n^{t_2}\ldots A_n^{p_s}B_n^{t_s},
 \end{equation}
where $p_1,t_s$ are nonnegative and $t_l$, $l=1,2,\ldots,s-1$, $p_l$, $l=2,\ldots,s$ are positive integers. Set $p=\sum_{l=1}^s p_l, t=\sum_{l=1}^s t_l$. Then $p+t=n-2$ and
Lemma \ref{lem:diagonal} implies that a word of the form (\ref{word_ab}) can have nonzero entries  only on  $((i-t)n-2i)$-diagonal (since $ip-(n-i)t= i(n-2-t)-(n-i)t= (i-t)n-2i)$).

If $2i=n$, then we need to have $i=1$ and $n=2$, and we obtain the word of length zero  $I_2$.

For $2i<n$ we have $|(i-t)n-2i|\leq n-1$ if and only if either $t=i$ or $t=i-1$. That is nontrivial  words of length $n-2$  have nonzero elements either on $(-2i)$- or $(n-2i)$-diagonal, thus are contained in  $\mc W(n,i)$.

For $2i>n$ we have $|(i-t)n-2i|\leq n-1$ if and only if either $t=i-2$ or $t=i-1$. That is nontrivial words of length $n-2$  have  elements either on $(n-2i)$- or $(2n-2i)$-diagonal, thus are also contained in  $\mc W(n,i)$.

Consequently, the span of all words in $A_n$ and $B_n$ of length $n-2$ is contained in
$\mc W(n,i)$.

Now we are left to prove that all words in $A_n$ and $B_n$ of length $n-2$ span a subspace in
$\mc W(n,i)$, which does not coincide with the whole space $\mc W(n,i)$.

We prove this statement using the induction on $n$.

Induction base:
If $n=2$, the only possible value for $i$ is $1$. Since $n-2=0$, the only word of length $n-2$ is thus $I_2$, and since $\dim \mc V(0)=n=2$, the base of induction  follows.

Induction step:
Let $n\geq 3$ and assume that for all $n'<n$ the statement holds.

I. If $i=1$ and $n\geq 3$, the result follows from the proof of~\cite[Theorem 2, case $p=n-1$]{MR2823023} for arbitrary $n$.

Consider $i\geq 2$.
Let us write $n$ as $n=qi+r$, where $1\leq r<i$.

II. Assume that $2\leq i< \frac n 2$.

In this case we have $B_n^2=0$ and $A_n^{q+1}=0$, and a nonzero  word  (\ref{word_ab}) is of  the form    $$
 A_n^{p_1}B_nA_n^{p_2}B_n\ldots A_n^{p_s},
$$ where $p_l\leq {q}$ for all $l=1,\ldots,s$ and $s=i$ or $s=i+1.$

Notice that all elements of a product $B_nA_n^j$, $j=0,\ldots,q$, are equal to zero, except those in the last $i$ rows.  The latter  have the following form:
\begin{align*}
&\begin{pmatrix}I_i&O_i&\ldots&O_i&O_{i,r}\end{pmatrix},\\ &\begin{pmatrix}O_i&I_i&\ldots&O_i&O_{i,r}\end{pmatrix},\\ &\vdots\\ &\begin{pmatrix}O_i&O_i&\ldots&I_i&O_{i,r}\end{pmatrix},\\ &\begin{pmatrix}O_i&O_i&\ldots&O_i&I_{i,r}\end{pmatrix},
 \end{align*}
  where $O_{i,r}$ and $I_{i,r}$ denote $i\times r$ submatrices of $O_i$ and $I_i$, respectively, formed by first $r$ columns.  Now we see that for $j<q-1$, the last $i$ columns of $B_nA_n^j$ are zero, and this gives us $$B_nA_n^jB_n=0,\   j=1,\ldots,q-2.$$
Therefore a word $V$ of length $n-2$, which is not trivially zero  and has its nonzero elements located on $(n-2i)$-diagonal, has the form  
$$A_n^{p_1}B_nA_n^{q-\varepsilon_1}B_n\cdots A_n^{q-\varepsilon_{i-2}}B_nA_n^{p_{i}},$$ $\varepsilon_j\in \{0,1\}$, $j=1,\ldots,i-2$.

Consider the projections of $B_nA_n^{q-1}$ and $B_nA_n^{q}$ on the lower right $i\times i$ corner. They are $(J^T_i)^r$ and $J_i^{i-r}$, respectively.
Therefore a subword $$B_nA_n^{q-\varepsilon_1}B_n\cdots B_nA_n^{q-\varepsilon_{i-2}}$$ of $V$ corresponds to a word of length $i-2$ in $(J^T_i)^r$ and $J_i^{i-r}$. By construction $\gcd(i,r)=1$, hence by induction hypothesis the span of all words of length $i-2$ in $(J^T_i)^r$ and $J_i^{i-r}$ is a proper subspace $\mc W$ of the space $\mc W(i,i-r)\subset M_i(\C)$. Take the words $U_1,\ldots, U_s,U_{s+1},\ldots,U_m$, which form a basis of the subspace $\mc W$, where $m<\dim \mc W(i,i-r)$, $U_1,\ldots,U_{s}$ are lower-triangular, $U_{s+1},\ldots,U_m$ are upper-triangular. They have pre-images  $$V_1=B_nA_n^{q-\varepsilon_{1,1}}B_n\cdots A_n^{q-\varepsilon_{i-2,1}},\ldots,  V_m=B_nA_n^{q-\varepsilon_{1,m}}B_n\cdots A_n^{q-\varepsilon_{i-2,m}},$$ $\varepsilon_{a,b}\in \{0,1\}$ for all $a=1,\ldots,i-2,$ $b=1,\ldots,m$.
The words $V_1B_n,\ldots, V_mB_n$ span a space equal to $\mc W$ located in the left lower $i\times i$ corner of $n\times n$ matrices. Finally we consider all words $A_n^{p_1}V_jB_nA_n^{p_{i}}$ of length $n-2$.

1. Suppose $2r<i$.  The intersection of $\mc V(n-2i)$ with the space generated by $i\times 2i$ matrices formed by rows $r+1,\ldots,r+i$ and columns $(q-2)i+1,\ldots, qi$  is isomorphic to $\mc W(i,i-r)$. To get nonzero entries in this submatrix we are able to take the words $A_n^{q-1}V_{h}B_nA_n^{q-2}$, $h\geq s+1$ and $A_n^{q-1}V_{g}B_nA_n^{q-1}$, $1\leq g\leq s$.  By construction \begin{multline*}\dim \langle A_n^{q-1}V_{h}B_nA_n^{q-2},\ A_n^{q-1}V_{g}B_nA_n^{q-1},  s+1\leq h\leq m, 1\leq g\leq s\rangle=m<\\<\dim \mc W(i,i-r),\end{multline*} thus the words in $A_n$ and $B_n$ of length $n-2$ do not span $\mc W(n,i)$.

2. Suppose $2r=i$, that is $r=1$, $i=2$.  The only nontrivial word of length $n-2$ with $1$ letter $B_n$ which lies in the $2$-dimensional intersection of $\mc V(n-2i)$ with the space generated by $2\times 2$ matrices formed by rows $r+1, r+2$ and columns $2q-1,  2q$  is $A_n^{q-1}B_nA_n^{q-1}$. Thus   the words in $A_n$ and $B_n$ of length $n-2$ do not span $\mc W(n,2)$.

3. Suppose $2r>i$.  The intersection of $\mc V(n-2i)$ with the space generated by $i\times (i+r)$ matrices formed by rows $r+1,\ldots,r+i$ and  last $i+r$ columns is isomorphic to $\mc W(i,i-r)$, since in $M_i(\C)$ we have $\dim \mc V(-2(i-r))=2r-i<r$. To get to this submatrix we are able to take the words $A_n^{q-1}V_{h}B_nA_n^{q-1}$, $h\geq s+1$ and $A_n^{q-1}V_{g}B_nA_n^{q}$, $1\leq g\leq s$.  By construction  \begin{multline*}\dim \langle A_n^{q-1}V_{h}B_nA_n^{q-1},\ A_n^{q-1}V_{g}B_nA_n^{q},  s+1\leq h\leq m, 1\leq g\leq s\rangle=m<\\<\dim \mc W(i,i-r), \end{multline*} thus the words in $A_n$ and $B_n$ of length $n-2$ do not span $\mc W(n,i)$.

III. If $i>\frac n 2$, then we apply the arguments from I or II to matrices $A_n^T$ and $B_n^T$, to show that their products of length $n-2$ span a proper subspace $\mc W$ in $\mc W(n,n-i)$. Thus we take transposes again, and see that $A_n$ and $B_n$ generate a proper subspace $(\mc W)^T$  in  $(\mc W(n,n-i))^T=\mc W(n,i)$ and the result follows.

\end{proof}

\begin{theorem} Let  $i< n$ be positive integers, $\gcd(n,i)=d$. Then for matrices from $M_n(\C)$ we have $$l(\{J_n^i,(J_n^T)^{n-i}\})=l_0(\{J_n^i,(J_n^T)^{n-i}\})=2\frac n d-2.$$
\end{theorem}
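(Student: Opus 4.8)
The plan is to reduce to the case $\gcd(n,i)=1$, derive the lower bound $2n-2$ from Lemma~\ref{lem:n-2}, and match it with an upper bound proved by an explicit choice of spanning words, in the spirit of the proof of Theorem~\ref{thm: Nonderogatory}.

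\textbf{Reduction.} Write $n=dn'$, $i=di'$, so $\gcd(n',i')=1$. From the proof of Theorem~\ref{thm:Ji} we have $J_n^i=J_{n'}^{i'}\otimes I_d$ and $(J_n^T)^{n-i}=(J_{n'}^T)^{n'-i'}\otimes I_d$, hence every word of length $\ell$ in $J_n^i,(J_n^T)^{n-i}$ equals the like word in $J_{n'}^{i'},(J_{n'}^T)^{n'-i'}$ tensored with $I_d$, and $I_n=I_{n'}\otimes I_d$. Therefore $\mc L_\ell^{0}(\{J_n^i,(J_n^T)^{n-i}\})=\mc L_\ell^{0}(\{J_{n'}^{i'},(J_{n'}^T)^{n'-i'}\})\otimes I_d$, and the same with $I_n$ adjoined, so $l$ and $l_0$ agree for the two pairs. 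Since the asserted value equals $2n'-2$, it suffices to treat $\gcd(n,i)=1$; and since transposition is an anti-automorphism of $M_n(\C)$ fixing $I_n$, preserving word lengths, and carrying $\{J_n^i,(J_n^T)^{n-i}\}$ to the pair with parameter $n-i$, I may also assume $2i\le n$.

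\textbf{Lower bound.} For $n=2$ (hence $i=1$) we have $\mc L_1(\{J_2,J_2^T\})=\langle I_2,J_2,J_2^T\rangle\neq M_2(\C)$, so $l\ge 2$. Assume $n\ge 3$. By Lemma~\ref{lem:diagonal} a word of length $\ell$ lies on a single diagonal whose index is congruent to $i\ell$ modulo $n$. Every diagonal appearing in $\mc W(n,i)$ has index $\equiv -2i\equiv i(n-2)\pmod n$, so a word meeting $\mc W(n,i)$ has length $\equiv n-2\pmod n$; as $n-2$ is the least positive such length, any word of length $\le 2n-3$ meeting $\mc W(n,i)$ has length exactly $n-2$, and by the computation in the proof of Lemma~\ref{lem:n-2} it then lies wholly inside $\mc W(n,i)$. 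Since $I_n$ lies on the $0$-diagonal, which is not a diagonal of $\mc W(n,i)$ (here $\gcd(n,i)=1$, $n\ge3$), the projection of $\mc L_{2n-3}(\{J_n^i,(J_n^T)^{n-i}\})$ onto $\mc W(n,i)$ is exactly the span of all words of length $n-2$, which by Lemma~\ref{lem:n-2} is a proper subspace of $\mc W(n,i)$. Hence $\mc L_{2n-3}(\{J_n^i,(J_n^T)^{n-i}\})\neq M_n(\C)$, so $l\ge 2n-2$, and therefore $l_0\ge l\ge 2n-2$.

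\textbf{Upper bound (the main obstacle).} It remains to show $\mc L_{2n-2}^{0}(\{J_n^i,(J_n^T)^{n-i}\})=M_n(\C)$; combined with $l\le l_0$ this yields $l=l_0=2n-2$. (Note that $C:=J_n^i+(J_n^T)^{n-i}$ is, for $\gcd(n,i)=1$, the permutation matrix of an $n$-cycle, hence invertible with $C^n=I_n$; Proposition~\ref{prop:C-H}(1) then already gives $l_0=l$, and $I_n=C^n$ is a word of length $n\le 2n-2$.) Mimicking the proof of Theorem~\ref{thm: Nonderogatory}, I would build for each diagonal $\mc V(k)$ a basis of words of length $\le 2n-2$, starting from the explicit formula $J_n^{ia}(J_n^T)^{n-i}J_n^{ib}=\sum_{c}E_{c+n-i-ia,\,c+ib}$ and its analogue with two factors $(J_n^T)^{n-i}$, which presents $\mc V(k)$ as a span of overlapping runs of matrix units. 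The words landing on a fixed $k$-diagonal have lengths $\ell_0(k),\ell_0(k)+n,\ell_0(k)+2n,\dots$, where $\ell_0(k)\le n$ is the least positive solution of $i\ell\equiv k\pmod n$; when $\ell_0(k)\le n-2$ two lengths, $\ell_0(k)$ and $\ell_0(k)+n$, are available below $2n-1$, which makes the construction go through on those diagonals (including the diagonals of $\mc W(n,i)$, where $\ell_0=n-2$ and length $2n-2$ is genuinely needed in view of Lemma~\ref{lem:n-2}). The tight cases are $\ell_0(k)\in\{n-1,n\}$: the $0$-diagonal ($\ell_0=n$), handled by adjoining $I_n$ (of length $n$ as $I_n=C^n$), and the two diagonals of index $\equiv -i\pmod n$ ($\ell_0=n-1$), for which one checks directly that the words of length $n-1$ landing there already span $\mc V(n-i)$ and $\mc V(-i)$. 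Carrying out this case analysis — in particular the last two spanning verifications — is the crux; granting it, $l=l_0=2n-2$ for $\gcd(n,i)=1$, and by the reduction step $l=l_0=2n/d-2$ in general.
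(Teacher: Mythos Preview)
Your reduction to $\gcd(n,i)=1$, your lower bound via Lemma~\ref{lem:n-2}, and your derivation of $l_0=l$ from Proposition~\ref{prop:C-H} are all correct and essentially match the paper's argument.

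The gap is exactly where you flag it: the upper bound. Your diagonal-by-diagonal construction is not carried out; even for the ``non-tight'' diagonals with $\ell_0(k)\le n-2$ you only assert that having two admissible lengths ``makes the construction go through,'' and for $\ell_0(k)\in\{n-1,n\}$ you leave the spanning checks as unfinished case work. (For instance, on the $0$-diagonal you produce only $I_n=C^n$, one vector in an $n$-dimensional space; the remaining $n-1$ dimensions still need to be found among words of length $n$.) So as written the proposal does not establish $l\le 2n-2$.

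The paper sidesteps all of this. You already observed that $C=J_n^i+(J_n^T)^{n-i}$ is the permutation matrix of an $n$-cycle, hence non-derogatory with $n$ distinct eigenvalues (the $n$-th roots of unity). Since replacing $(J_n^T)^{n-i}$ by $C$ is an invertible linear change of generators, $l(\{J_n^i,(J_n^T)^{n-i}\})=l(\{J_n^i,C\})$, and then the paper simply invokes \cite[Theorem~4.1(b)]{MR1483779}, which gives $l(\mc S)\le 2n-2$ whenever the generating set contains a matrix with $n$ distinct eigenvalues. That one citation replaces your entire explicit construction, and your own observation about $C$ is precisely the hypothesis it needs.
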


\begin{proof}
It follows from the proof of Theorem~\ref{thm:Ji}, that if $\gcd(n,i)=d>1$, then   $l(\{J_n^i,(J_n^T)^{n-i}\})=l(\{J^{\frac i d}_{\frac n d}, (J_{\frac n d}^T)^{\frac {n-i}{d}}\})$, therefore it is sufficient to consider the case when $i$ and $n$ are co-prime.

Trivially we have $l(\{J_n^i,(J_n^T)^{n-i}\})=l(\{J_n^i,J_n^i+(J_n^T)^{n-i}\})$.
Since $J_n^i+(J_n^T)^{n-i}$ is a non-derogatory matrix, which has $n$ distinct roots of unity as its eigenvalues, then the upper bound $l(\{J_n^i,J_n^i+(J_n^T)^{n-i}\})\leq 2n-2$  follows from \cite[Theorem 4.1. (b)]{MR1483779}.

Now we prove the  lower bound $l(\{J_n^i, (J_n^T)^{n-i}\})\geq 2n-2$.

Without loss of generality we assume now that $i<\frac {n}{2}$, since $$l(\{J_n^i,(J_n^T)^{n-i}\})=l( \{(J^T_n)^i,J_n^{n-i}\}).$$

Arguing similarly as when calculating the period of the sequence \eqref{k-sequence} in Theorem~\ref{thm:Ji}, and applying Lemma~\ref{lem:diagonal}  we calculate that a  word with $p$ letters $J_n^i$ and $t$ letters $(J_n^T)^{n-i}$  has its nonzero entries on $-2i$- or $(n-2i)$-diagonal, i.e. belongs to the space $\mc W(n,i)$, if and only if $p+t+2=\alpha n$ for positive integers $\alpha$. Lemma~\ref{lem:n-2} yields that considering only $\alpha=1$ is not enough to generate $\mc W(n,i)$, thus it is necessary to consider $\alpha=2$ as well. Consequently,  there exists a  word of length $2n-2$ which is not contained in $\mc L_{2n-3}(\{J_n^i,(J_n^T)^{n-i}\})$ and $l(\{J_n^i, (J_n^T)^{n-i}\})\geq 2n-2$.

Since $J_n^i+(J_n^T)^{n-i}$  is an  invertible matrix, the equation $l(\{J_n^i, (J_n^T)^{n-i}\})=l_0(\{J_n^i, (J_n^T)^{n-i}\})$ follows from  Proposition~\ref{prop:C-H}.
\end{proof}

One of Hans Schneider's first papers \emph{A pair of matrices with property P} \cite{MR0068509} deals with conditions which guarantee the simultaneous triangularizability of a set of matrices, and he continued to contribute to, and influence research in matrix theory from the standpoint of the theory of finite dimensional algebras.

\appendix
\section*{References}

\bibliographystyle{plain}
\bibliography{AlgebraLengths}

\begin{thebibliography}{10}

\bibitem{MR2112883}
D.~Constantine and M.~Darnall.
\newblock Lengths of finite dimensional representations of {PBW} algebras.
\newblock {\em Linear Algebra Appl.}, 395:175--181, 2005.

\bibitem{MR709356}
Fergus~J. Gaines, Thomas~J. Laffey, and Helene~M. Shapiro.
\newblock Pairs of matrices with quadratic minimal polynomials.
\newblock {\em Linear Algebra Appl.}, 52/53:289--292, 1983.

\bibitem{MR2494664}
A.~E. Guterman and O.~V. Markova.
\newblock Commutative matrix subalgebras and length function.
\newblock {\em Linear Algebra Appl.}, 430(7):1790--1805, 2009.

\bibitem{Magic}
A.~E. Guterman, O.~V. Markova, and S.D. Sochnev.
\newblock The algebra of semimagic matrices and its length.
\newblock {\em Zap. Nauch. Sem. POMI}, 419:52--76, 2013.
\newblock Translated in J. Math. Sci., New York, 2014, 199:4, 400--413.

\bibitem{MR636208}
Thomas~J. Laffey.
\newblock Algebras generated by two idempotents.
\newblock {\em Linear Algebra Appl.}, 37:45--53, 1981.

\bibitem{MR2540352}
M.~S. Lambrou and W.~E. Longstaff.
\newblock On the lengths of pairs of complex matrices of size six.
\newblock {\em Bull. Aust. Math. Soc.}, 80(2):177--201, 2009.

\bibitem{MR2050111}
W.~E. Longstaff.
\newblock Burnside's theorem: irreducible pairs of transformations.
\newblock {\em Linear Algebra Appl.}, 382:247--269, 2004.

\bibitem{MR2230653}
W.~E. Longstaff, A.~C. Niemeyer, and Oreste Panaia.
\newblock On the lengths of pairs of complex matrices of size at most five.
\newblock {\em Bull. Austral. Math. Soc.}, 73(3):461--472, 2006.

\bibitem{MR2823023}
W.~E. Longstaff and Peter Rosenthal.
\newblock On the lengths of irreducible pairs of complex matrices.
\newblock {\em Proc. Amer. Math. Soc.}, 139(11):3769--3777, 2011.

\bibitem{MR2366242}
O.~V. Markova.
\newblock Computation of the lengths of matrix subalgebras of a special form.
\newblock {\em Fundam. Prikl. Mat.}, 13(4):165--197, 2007.
\newblock Translated in J. Math. Sci., New York, 2008, 155:6, 908--931.

\bibitem{MR3032290}
O.~V. Markova.
\newblock The length function and matrix algebras.
\newblock {\em Fundam. Prikl. Mat.}, 17(6):65--173, 2011/12.
\newblock Translated in J. Math. Sci., New York, 2013, 193:5, 687--768.

\bibitem{MR1483779}
Christopher~J. Pappacena.
\newblock An upper bound for the length of a finite-dimensional algebra.
\newblock {\em J. Algebra}, 197(2):535--545, 1997.

\bibitem{MR740668}
Azaria Paz.
\newblock An application of the {C}ayley-{H}amilton theorem to matrix
  polynomials in several variables.
\newblock {\em Linear and Multilinear Algebra}, 15(2):161--170, 1984.

\bibitem{MR0068509}
Hans Schneider.
\newblock A pair of matrices with property {$P$}.
\newblock {\em Amer. Math. Monthly}, 62:247--249, 1955.

\end{thebibliography}
\end{document}